\documentclass[twoside.11pt]%
{article}

\newsavebox{\savepar}

\makeatletter
\newcommand{\least}{\let\CS=\@currsize\renewcommand{\baselinestretch}{1}\tiny\CS}

\newcommand{\oneandahalfspacing}{\let\CS=\@currsize\renewcommand{\baselinestretch}{1.2}\tiny\CS}
\newcommand{\doublespacing}{\let\CS=\@currsize\renewcommand{\baselinestretch}{2.5}\tiny\CS}

\usepackage{amsmath}
\usepackage{amsfonts}
\usepackage{euscript}
\usepackage{oldgerm}
\usepackage{eufrak}
\usepackage{amsthm}
\usepackage{rotating}
\usepackage{dsfont}

   \oddsidemargin .2in
   \renewcommand{\baselinestretch}{1.6}

   \parindent .3in
   \pagestyle{myheadings}
   \topmargin .3in
   \topskip .2in
   \evensidemargin .2in
   \textwidth 6.2in
   \textheight 8.8in

\makeindex

  \newtheorem{theorem}{Theorem}[section]

  \theoremstyle{definition}

  \theoremstyle{remark}
  
  \newtheorem{prop}[theorem]{Proposition}

\theoremstyle{definition}

\theoremstyle{remark}

\def\liminf{\mathop{\underline{\hbox{lim}}}\limits} 
\def\limsup{\mathop{\overline{\hbox{lim}}}\limits}

\newcommand{\ba}{\begin{array}}
\newcommand{\ea}{\end{array}}

\newcommand{\NN}{\mathbb{N}}
\newcommand{\RR}{\mathbb{R}}

\DeclareMathOperator{\Span}{span}
\newcommand{\suchthat}{\;\ifnum\currentgrouptype=16 \middle\fi|\;}
\newcommand{\vect}{\mathrm{Vect}}
\title{Constrained Optimal Smoothing and Bayesian Estimation}
\author{X. Bay\footnotemark[2] \and L. Grammont\footnotemark[3]  
}

\footnotetext[2]{Mines Saint-\'Etienne, UMR   6158, LIMOS, F-42023 Saint-\'Etienne, France ({\tt bay@emse.fr}).}
\footnotetext[3]{Universit\'e de Lyon, Institut Camille Jordan, UMR 5208, 23 rue du Dr Paul Michelon, 
42023 Saint-\'Etienne Cedex 2, France.
( {\tt laurence.grammont@univ-st-etienne.fr}).}
 
\begin{document}
\sloppy
\maketitle

\noindent \textbf{Abstract}\\
In this paper, we extend the correspondence between Bayesian estimation and optimal smoothing in a Reproducing Kernel Hilbert Space (RKHS) adding a convexe constraints on the solution.   Through a sequence of approximating Hilbertian spaces and a discretized model, we prove that the Maximum A Posteriori (MAP)   of the posterior distribution is  exactly the optimal  constrained smoothing function in the RKHS. This paper can be read as a generalization of the paper \cite{wahba1} of Kimeldorf-Wahba where it is proved that the optimal smoothing solution is the mean of the posterior distribution.    
\bigskip

\noindent {\bf Keywords: correspondence; smoothing; inequality constraints; Reproducing Kernel Hilbert Space;   Bayesian estimation}  

\bigskip

\noindent
\section{Introduction}
  Consider $X$  a nonempty set of $\RR$ and $E$ a  set of   functions from $X$ to $\RR$. \\ 
 Given data $(x_i,y_i)\in X \times \RR $,  the   smoothing problem is   to find  a function $\widehat{u}$ minimizing 
\begin{equation}\label{smoothing}
  \|u\|_H^2+\displaystyle \frac{1}{\sigma^2} \sum_{i=1}^{n}(u(x_i)-y_i)^2
\end{equation} 
on an Hilbert space $H$ of 
 $E$.  \\
As Kimeldorf and Wahba explained it in \cite{wahba1}, the term  $\|u\|_H^2$ is the smoothness criterion for the solution and $\displaystyle\frac{1}{\sigma^2} \sum_{i=1}^{n}(u(x_i)-y_i)^2$ measures the disparity of $u$ with the data. $\widehat{u}$ is a compromise between smoothness and fidelity to the data. In \cite{wahba1}, the disparity of the data is measured by 
$\displaystyle \sum_{i=1}^{n}\sum_{j=1}^{n}(u(x_i)-y_i)b^{ij}(u(x_j)-y_j) $. We choose $b^{ij}=\displaystyle\frac{1}{\sigma^2}\delta_{ij}$ with no loss of generality,  to facilitate subsequent reading of the paper. In \cite{wahba1}, the authors consider $\|u\|_H^2:=\displaystyle\int_{-\infty}^{+\infty}(Lu)^2(t)dt$, where $L$ is a linear differential operator. In that case, under conditions,  the solution $\widehat{u}$ is an $L$-spline. In this paper, $H$ is any Reproducing Kernel Hilbert Space equipped with its associated norm.    \\

The notion of Reproducing Kernel Hilbert Space (RKHS) is the material with which one has  built a bridge from the determistic world of  optimization  to the probabilistic worlds of estimation.  Aronszajn \cite{aron1950} published the theory of reproducing kernel in 1950  and  Parzen\cite{parzen1959} published Statistical inference on time series by Hilbert space methods in 1959. Later Schwartz \cite{schwartz1964} extended its formalism to topological spaces. Through its covariance fonction, $K(x,x')=cov(U(x),U(x'))$, a Gaussian process $U$ is represented by the Hilbert space spanned by the kernel $K$.

\newpage
The aim of Kimeldorf and Wahba in \cite{wahba1} is to highlight the correspondence between the smoothing by spline and  Bayesian estimation. For that purpose, they consider a stochastic model in which the selection of the smoothing criterion corresponds to the specification of a prior distribution on $U$ which is a Centered Gaussian process with   covariance fonction $ K(s,t)$. At points $x_1,\ldots,x_n$ the random variables $Y_i=U(x_i)+{\cal E}_i$ are observed, where ${\cal E}=({\cal E}_i)_i$ is a centered Gaussian vector ${\cal N}(0,\sigma I)$, where $I$ is the identity matrix. \\
 They prove that $\widehat{u}$ solution of the  minimization of (\ref{smoothing}) is the  Bayesian estimation 
\begin{equation}\label{estimation}
\widehat{u}  = \mathds{E}[U(t)\vert Y_1=y_1,\ldots,Y_n=y_n], 
\end{equation}
where $\mathds{E}$  denotes expectation. In other terms, it means that we look for  $u$ in the Gaussian space $H=\overline{span\{ U_x, x\in X\}}$ associated to the Centered  Gaussian Process $(U_x)_{x \in X}$ whose covariance is $K$.  
 
In both framework, one can prove that the solution of the smoothing problem (\ref{smoothing}) or Bayesian estimation (\ref{estimation}) is
\begin{equation}\label{explicite}
\widehat{u}  (t)= \boldsymbol{y}  \left(\mathds{K}+\sigma^2 I\right)^{-1} \boldsymbol{k}(t)^{\top}   
\end{equation} 
where $\boldsymbol{k}(t)=\left( K\left(x_1,t\right),\ldots,K\left(x_n,t\right)\right)$, $\mathds{K}$ is the matrix 
$\left(K\left(x_i,x_j\right)\right)_{1\leq i,j\leq n}$ and $\boldsymbol{y}=(y_1,\ldots,y_n)$. \\
 
Now, consider that $u$ is known to satisfy some additional constraints given by $u \in C$ where $C$ is a closed convex set. 
In the present paper, we consider the  {\bf constrainted} smoothing problem of   finding  a function $\widehat{u}$, in $H$ and $C$,  minimizing 
\begin{equation}\label{constrained} 
  \|u\|_H^2+\displaystyle \frac{1}{\sigma^2} \sum_{i=1}^{n}(u(x_i)-y_i)^2,
\end{equation} 
over $H\cap C$. \\
   In \cite{Micchelli1}, Theorem 3.1,  Micchelli and Utreras proved that the solution exists and is unique under certain conditions. They also give the expression of the solution involving the projection on the convex set $C$, denoted by $P_C$, and  a nonlinear algebraic system expressed with $P_C$ and approximated usually by some Newton type methods (see \cite{dontchev1}).  As Andersson and Elfving wrote it in their paper \cite{Andersson1991}, to transform this result into a numerical algorithm, it is necessary to compute the orthogonal projection $P_C$ and the difficulty lies in that calculation. Andersson and Elfving   investigated the structure of the projection operator $P_C$  for a particular convex set $C$ representing monotonicity constraints.  \\
   
 The aim of the present paper is to rewrite the constrained smoothing problem  (\ref{constrained}) with a stochastic model so that the solution can be interpreted as a Bayesian estimation. We found out that it was possible through a discretization   of the constrained smoothing problem whose solution $\widehat{u}_N$  tends to  $\widehat{u}$. The integer $N$ is the discretization parameter. Section 3 and Section 4 are devoted to it. Then we define the equivalent finite dimensional approximation $U_N$ of the Gaussian process $U$ and we prove that the approximate solution $\widehat{u}_N$ can be interpreted as a Bayesian estimation. This estimation is  the MAP (Maximum A Posteriori) of the posterior distribution  of $U_N$ and not the mean like in the non constrainted smoothing problem. 
 
\section{Framework of   Constrained Optimal Smoothing} 
To simplify the paper, we suppose that $X=[0,1]$ and $E=C([0,1],\RR)$ is the linear   space
of real valued continuous functions on   $[0,1]$ equiped with the infinity norm. Let    
    $H$ be a RKHS of $E$ associated to the symmetric positive definite function $K$. Then, $H$ is an Hilbertian subspace of $E$ since
\begin{equation*}
\| h\|_E=\sup_{x\in X}|(h,K(.,x))_H|\leq c\|h\|_H,
\end{equation*}
where $c=\sup_{x\in X}K(x,x)^{1/2}<+\infty$.  \\
Let $(x_i,y_i)_{1\leq i \leq n}$  be the given data.\\
Let us define the function $J:H\longrightarrow \RR $ by 
\begin{equation}\label{J} 
J(u):=\|u\|_H^2+\displaystyle \frac{1}{\sigma^2} \sum_{i=1}^{n}(u(x_i)-y_i)^2
\end{equation}
The Constrained Optimal Smoothing (\ref{constrained}) can be rewritten as
\begin{equation}\label{prob}\tag{$P$}
  \quad \min_{u\in H\cap C } J(u)  
  \end{equation}
It is easy to see that $J$ is Fr\'echet differentiable and $\displaystyle\lim_{\Vert v\Vert \mapsto+\infty} J(v)=+\infty $. Moreover $J$ is strongly convex: $\forall u, v \in H, t \in [0,1],$
\begin{equation*}
J(tu+(1-t)v)\leq tJ(u)+(1-t)J(v) -t(1-t)\Vert u-v\Vert.
\end{equation*}
Then, if $C$ is a closed convex set of $E$ such that 
\begin{equation}\label{H1}\tag{$H1$}
H\cap C \neq \emptyset
\end{equation}
The problem $(P)$ has a unique solution, denoted by  $\widehat{u}  $.

\section{Discretization of Constrained Optimal Smoothing }

We propose a discretized optimization problem 
 (\ref{pbdis}) of (\ref{prob}) associated with   $\Delta_N$ a   subdivision of $[0,1]$  
\begin{eqnarray}\label{partition}
\Delta_N~: \quad 0=t_{0} < t_{1} < \ldots < t_{N}= 1, 
\end{eqnarray}
such that $\delta_N=\max\left\{|t_{i+1}-t_{i}|, \ i=0,\ldots,N-1 \right\} $ tends to zero as $N$ tends to infinity. We assume 
\begin{equation}\label{H2}\tag{$H2$}
\Delta_{N} \subset \Delta_{N+1} 
\end{equation}
and the data points $x_i$ belong to the partition 
\begin{equation}\label{H3}\tag{$H3$}
\{ x_1,\ldots,x_n\} \subset \Delta_{N} 
\end{equation}
 Let $H_N$ be 
 the classical subspace  of piecewise linear continuous functions associated to $\Delta_N$. A basis  of $H_N$ is   the so-called hat functions denoted by
$(\varphi_{0},\ldots,\varphi_{N})$.   
Next, we define  $\pi_N$ to be the classical piecewise linear interpolation  projection defined from $E$ onto $H_N$    by
$$\forall f\in E, \qquad \pi_N (f)=\sum\limits_{j=0}^{N} f(t_{j})\varphi_{j}.$$
We assume that 
\begin{equation}\label{H4}\tag{$H4$}
\pi_N(C)\subset C  
\end{equation}
According to a classical approximation result  
\begin{equation}\label{PN}
\pi_N(f) \underset{N\to +\infty}{\longrightarrow} f \quad \mbox{in $E$},
\end{equation}  
Let us define the linear evaluation operator ${\cal I}_N: E\mapsto \RR^{N+1}$ on the nodes $t_i$  and ${\cal I}_n:E\mapsto \RR^{n}$ on the data points $x_i$. 
$$\forall f \in E, \ {\cal I}_N(f):=\left(f(t_{0}),\ldots,f(t_{N})\right)^{\top} $$
$$\forall f \in E, \  {\cal I}_n(f):=\left(f(x_1),\ldots,f(x_{n})\right)^{\top}$$
To lighten the notations, $c_f$ denotes ${\cal I}_N(f)$. 
Let us define the matrix of $K$ on the nodes  
\begin{equation}\label{gamma_N}\Gamma_N=(K(t_i,t_j))_{0\leq i,j\leq N}
\end{equation} 
We suppose
 \begin{equation}\label{H5}\tag{$H5$} 
\Gamma_N \quad \mbox{is invertible}
\end{equation}
As $H$ is an RKHS associated to the kernel $K$, we can define a new  scalar product on $H_N$: $\forall u_N, v_N \in H_N$
\begin{equation}\label{proscalHN}
(u_N,v_N)_{H_N}:=c_{u_N}^{\top}\Gamma_N^{-1}c_{v_N} 
\end{equation}
which induces a norm on $H_N$: $\forall u_N  \in H_N $ 

\begin{equation}\label{normHN}
 \Vert u_N \Vert_{H_N}^2=c_{u_N}^{\top}\Gamma_N^{-1}c_{u_N}\end{equation} 
 Let us define the linear  operator $\rho_N: H_N \rightarrow H$ defined by 
\begin{equation}\label{QN}
 \forall v_N \in H_N, \ \rho_N(v_N):=\displaystyle\sum_{i=0}^{N} \lambda_i K(.,t_i) 
\end{equation}
where   $\Lambda=(\lambda_0,\ldots,\lambda_N)^{\top},$    solves  $$\Gamma_N\Lambda=c_{v_N}.$$ 
Let us notice that $\rho_N$ has been defined so that  $\rho_N\circ \pi_N$ is the orthogonal projection from $H$ onto $H_N^1$ where 
\begin{eqnarray}
H_1=\Span\left\{K(.,t_{j}), \ j=0,\ldots,N\right\}.
\end{eqnarray}
 The following proposition highlights the nature of the finite dimensional space $H_N$:

\begin{prop}\label{rkhsHN}   
 $H_N$ is a RKHS with kernel $K_N$ given by
\begin{equation}\label{KN}
\forall x', x\in [0,1], \qquad K_N(x',x)= \sum_{i,j=0}^{N} K(t_{i},t_{j}) \varphi_{j}(x)\varphi_{i}(x').
\end{equation}
For any $h_N\in H_N$, 
\begin{equation}\label{HN}
\|h_N\|_E\leq c\|h_N\|_{H_N},
\end{equation}
 where $c$ is a constant independent of $N$.  
\end{prop}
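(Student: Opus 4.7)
The plan is to verify the three defining properties of a RKHS for $H_N$ with the proposed kernel $K_N$, and then derive the uniform norm bound from the reproducing property.

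First, I would check that $(H_N,(\cdot,\cdot)_{H_N})$ is a well-defined finite-dimensional Hilbert space. Since $\Gamma_N$ is symmetric positive definite by assumption \eqref{H5}, the formula $(u_N,v_N)_{H_N} = c_{u_N}^{\top}\Gamma_N^{-1}c_{v_N}$ defines a genuine inner product on $H_N$, because the coordinate map $u_N \mapsto c_{u_N}$ is a linear bijection from $H_N$ to $\RR^{N+1}$ (the hat functions satisfy $\varphi_j(t_i)=\delta_{ij}$).

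Second, I would establish the reproducing property. For $x\in[0,1]$, the expression \eqref{KN} shows $K_N(\cdot,x)\in H_N$, and I would compute its coordinate vector. Using $\varphi_i(t_k)=\delta_{ik}$, the $k$-th entry of $c_{K_N(\cdot,x)}$ is
\begin{equation*}
K_N(t_k,x) = \sum_{i,j=0}^{N}K(t_i,t_j)\varphi_j(x)\varphi_i(t_k) = \sum_{j=0}^{N}K(t_k,t_j)\varphi_j(x),
\end{equation*}
so $c_{K_N(\cdot,x)} = \Gamma_N\Phi(x)$ where $\Phi(x)=(\varphi_0(x),\ldots,\varphi_N(x))^{\top}$. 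Hence
\begin{equation*}
(h_N,K_N(\cdot,x))_{H_N} = c_{h_N}^{\top}\Gamma_N^{-1}\Gamma_N\Phi(x) = c_{h_N}^{\top}\Phi(x) = \sum_{j=0}^{N}h_N(t_j)\varphi_j(x) = h_N(x),
\end{equation*}
which yields the reproducing identity and, in particular, $K_N(x,x) = \Phi(x)^{\top}\Gamma_N\Phi(x) \geq 0$. Symmetry of $K_N$ is immediate from symmetry of $K$.

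Third, for the uniform continuity of evaluations, I would apply Cauchy-Schwarz to the reproducing formula:
\begin{equation*}
|h_N(x)| \;=\; |(h_N,K_N(\cdot,x))_{H_N}| \;\leq\; \|h_N\|_{H_N}\sqrt{K_N(x,x)}.
\end{equation*}
The key bound, which makes the constant uniform in $N$, uses that the hat functions form a partition of unity with $\varphi_i(x)\geq 0$ and $\sum_i \varphi_i(x)=1$, together with $|K(t_i,t_j)|\leq \sqrt{K(t_i,t_i)K(t_j,t_j)} \leq c^2$ from the pointwise bound stated in Section 2:
\begin{equation*}
K_N(x,x) = \sum_{i,j=0}^{N}K(t_i,t_j)\varphi_i(x)\varphi_j(x) \leq c^2\Bigl(\sum_{i=0}^N\varphi_i(x)\Bigr)\Bigl(\sum_{j=0}^N\varphi_j(x)\Bigr) = c^2.
\end{equation*}
Taking the supremum over $x$ then gives $\|h_N\|_E \leq c\|h_N\|_{H_N}$ with the same constant $c=\sup_{x\in X}K(x,x)^{1/2}$ as in Section 2, independent of $N$.

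The only subtle point is the $N$-independence of the constant; it is handled precisely by exploiting the partition-of-unity structure of the hat basis, which avoids any factor growing with $N$. Everything else is essentially bookkeeping with the change-of-basis provided by $\Gamma_N$.
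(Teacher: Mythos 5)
Your proposal is correct and follows essentially the same route as the paper: identify the coordinate vector of $K_N(\cdot,x)$ as $\Gamma_N\varphi(x)$, verify the reproducing identity by cancellation against $\Gamma_N^{-1}$, and bound $K_N(x,x)$ uniformly in $N$ via the partition-of-unity property of the hat functions together with Cauchy--Schwarz. The only cosmetic difference is that the paper bounds $K_N(x,x)$ by $M=\max_{x,x'}|K(x,x')|$ while you bound it by $c^2=\sup_x K(x,x)$; these agree since $|K(x,x')|\leq\sqrt{K(x,x)K(x',x')}$.
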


\begin{proof}
Clearly, $H_N$ is a finite-dimensional Hilbert space. Let $x$ be in $[0,1]$. We have 
\begin{equation*} 
K_N(.,x)=\sum_{i=0}^N \lambda_{i,x} \varphi_{i} \in H_N,
\end{equation*}
where $\lambda_{i,x}=\displaystyle\sum_{j=0}^N K(t_{i},t_{j})\varphi_{j}(x)=\left(\Gamma_N\varphi(x)\right)_i$, with 
\begin{equation}\label{phi}
\varphi(x):=(\varphi_{0}(x),\ldots,\varphi_{N}(x))^{\top}.
\end{equation} 
 Let $h:=\displaystyle\sum_{i=0}^N \alpha_{i} \varphi_{i}=\alpha^{\top}  \varphi(x) \in H_N$, \ $\alpha:=(\alpha_{0},\ldots,\alpha_{N} )^{\top}$.  We obtain 
\begin{eqnarray*}
\left(h, K_N(.,x)\right)_{H_N}=\alpha^{\top}  \Gamma_N^{-1}\left(\Gamma_N\varphi(x)\right)=\alpha^{\top}  \varphi(x)=h(x),
\end{eqnarray*}
which is the reproducing property in $H_N$. \\
For $x\in X$, we have
\begin{equation*}
|h(x)|=|(h,K_N(.,x))_{H_N}|\leq \|h\|_{H_N}\times \sqrt{K_N(x,x)},
\end{equation*}
where $K_N(x,x)=\sum_{i,j=0}^N K(t_{i},t_{j})\phi_{i}(x)\phi_{j}(x)$. Since $\sum_{i,j=0}^N\phi_{i}(x)\phi_{j}(x)=1$, we obtain
\begin{equation*}
0\leq \sup_{x\in X}K_N(x,x)\leq M=\max_{x,x'\in X}|K(x,x')|,
\end{equation*}
As $\|h_N\|_E=\|h_N\|_{\infty}$,  the proof of the lemma is completed.
 
\end{proof}

In the following proposition, one proves that the sequence of projections  $\pi_N$ is stable. \\    
It is straightforward that for all $f$ in $E$, 
\begin{equation*}
\| \pi_N(f)\|^2_{H_N}=c_{f}^{\top}\Gamma_N^{-1}c_{f},
\end{equation*}

\begin{prop}\label{stability} {\bf Stability of $\pi_N$} \\
 $\pi_N$ is stable, i.e.
\begin{equation}\label{stab}
\forall h\in H, \qquad \|\pi_N(h)\|_{H_N}\leq \|h\|_H.
\end{equation}
Moreover $H$ is characterized by  
\begin{equation}\label{newdefH}
H=\left\{h\in E~: \  \sup_{N} \|\pi_N(h)\|_{H_N}<+\infty\right\}
\end{equation}
and, for all $h\in H$, by 
\begin{equation}\label{normH}
\|f\|_H^2= \lim_{N\to +\infty}\|\pi_N(f)\|_{H_N}^2. 
\end{equation}
\end{prop}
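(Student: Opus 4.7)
The plan is to exploit the fact that the map $\rho_N$ defined in (3.11) is an isometry from $(H_N,\|\cdot\|_{H_N})$ onto the subspace $H_N^1\subset H$. A direct computation from (3.10) shows that for $v_N\in H_N$ with $\Lambda=\Gamma_N^{-1}c_{v_N}$,
\begin{equation*}
\|\rho_N(v_N)\|_H^{2}=\Lambda^{\top}\Gamma_N\Lambda=c_{v_N}^{\top}\Gamma_N^{-1}c_{v_N}=\|v_N\|_{H_N}^{2}.
\end{equation*}
Combined with the observation already noted in the excerpt that $\rho_N\circ\pi_N$ is the orthogonal projection of $H$ onto $H_N^1$, and hence a contraction, the stability bound (3.15) is immediate: $\|\pi_N(h)\|_{H_N}=\|\rho_N(\pi_N(h))\|_H\le\|h\|_H$ for every $h\in H$.

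To obtain (3.17) and the inclusion $\subseteq$ in (3.16), I would argue that the sequence of subspaces $H_N^1$ exhausts $H$. By (H2) these subspaces are nested, and because $\delta_N\to 0$ together with continuity of $K$, every kernel section $K(\cdot,x)$ is a $\|\cdot\|_H$-limit of $K(\cdot,t)$ for $t\in\bigcup_N\Delta_N$. Since $H$ is the closure of $\Span\{K(\cdot,x)\colon x\in[0,1]\}$, this gives $\overline{\bigcup_N H_N^1}=H$. Therefore, for any $h\in H$, the orthogonal projections $\rho_N(\pi_N(h))$ converge to $h$ in $H$, and by the isometry above $\|\pi_N(h)\|_{H_N}^{2}=\|\rho_N(\pi_N(h))\|_H^{2}\to\|h\|_H^{2}$, proving (3.17).

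The real work is the reverse inclusion in (3.16). Suppose $h\in E$ satisfies $M:=\sup_N\|\pi_N(h)\|_{H_N}<+\infty$, and set $h_N:=\rho_N(\pi_N(h))\in H_N^1\subset H$. The isometry gives $\|h_N\|_H\le M$, so by weak compactness of bounded sets in $H$ some subsequence $(h_{N_k})$ converges weakly to a $\tilde h\in H$. Since weak convergence in an RKHS implies pointwise convergence via the reproducing property $h_{N_k}(x)=(h_{N_k},K(\cdot,x))_H$, one has $\tilde h(x)=\lim_k h_{N_k}(x)$ for every $x\in[0,1]$.

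The crux, and where I expect the main obstacle to lie, is identifying $\tilde h$ with $h$. The key observation is that $h_N$ interpolates $h$ at the nodes: by construction $h_N(t_j)=(\Gamma_N\Lambda)_j=h(t_j)$ for all $t_j\in\Delta_N$. Given $x\in[0,1]$, pick $x_N\in\Delta_N$ with $x_N\to x$, which is possible since $\delta_N\to 0$. By the reproducing property together with continuity of $K$,
\begin{equation*}
|h_N(x)-h_N(x_N)|\le\|h_N\|_H\,\|K(\cdot,x)-K(\cdot,x_N)\|_H\longrightarrow 0,
\end{equation*}
while $h_N(x_N)=h(x_N)\to h(x)$ by continuity of $h\in E$. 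Hence $h_N(x)\to h(x)$ for every $x$, so $\tilde h=h$ and $h\in H$. This combination of weak compactness, the RKHS reproducing property, and the nodal interpolation identity $h_N(t_j)=h(t_j)$ is the essential non-trivial ingredient of the proof.
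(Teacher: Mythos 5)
Your proof is correct, but it is organized rather differently from the paper's. For the stability bound (\ref{stab}) the two arguments are essentially the same thing in different clothing: the paper decomposes $H=H_0\overset{\perp}{\oplus}H_1$ with $H_1=\Span\{K(\cdot,t_j)\}$ and computes $\|h_1\|_H^2=c_h^{\top}\Gamma_N^{-1}c_h=\|\pi_N(h)\|_{H_N}^2\leq\|h\|_H^2$, which is precisely your ``orthogonal projection composed with the isometry $\rho_N$'' argument; since the paper only asserts (and does not prove) that $\rho_N\circ\pi_N$ is the orthogonal projection onto $H_1$, you should add the one-line check that $h-\rho_N(\pi_N(h))$ vanishes at every node and is therefore orthogonal to each $K(\cdot,t_j)$. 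The genuine divergence is in the second half: for the characterization (\ref{newdefH}) and the limit (\ref{normH}) the paper gives no argument at all and simply cites Parzen and the earlier paper of Bay, Grammont and Maatouk, whereas you supply a self-contained proof. Your density argument ($\bigcup_N H_N^1$ is dense in $H$ because the nodes are dense and $t\mapsto K(\cdot,t)$ is norm-continuous, so the nested orthogonal projections converge strongly to the identity) yields (\ref{normH}) and, as a by-product, the consistency property (\ref{consistance}) that the paper establishes only later --- and establishes there by \emph{using} (\ref{normH}); your route removes that dependency. Your treatment of the reverse inclusion in (\ref{newdefH}) --- weak compactness of the bounded sequence $h_N=\rho_N(\pi_N(h))$, pointwise convergence via the reproducing property, and identification of the limit with $h$ through the nodal interpolation $h_N(t_j)=h(t_j)$ combined with the uniform bound $\|h_N\|_H\leq M$ --- is sound and is essentially the argument of the cited reference. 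The only caveats are the implicit regularity facts you lean on: norm continuity of $t\mapsto K(\cdot,t)$, i.e.\ $\|K(\cdot,x)-K(\cdot,t)\|_H^2=K(x,x)-2K(x,t)+K(t,t)\to 0$, which requires continuity of $K$ including on the diagonal (the paper uses this tacitly as well), and density of $\bigcup_N\Delta_N$ in $[0,1]$, which follows from $\delta_N\to 0$ together with (\ref{H2}).
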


\begin{proof}
Consider the usual orthogonal decomposition in the R.K.H.S $H$:
$H=H_0\overset{\perp}{\oplus} H_1$, with
\begin{eqnarray*}
H_0&=&\left\{h\in H \ : \ h(t_{j})=0, \ j=0,\ldots,N\right\},\\
H_1&=&\Span\left\{K(.,t_{j}), \ j=0,\ldots,N\right\}.
\end{eqnarray*}
For all $h\in H$, there exists a unique $h_0\in H_0$ and $h_1\in H_1$ such that $h=h_0+h_1$. Thus,
\begin{equation*}
\| h_1\|_{H}^2\leq \| h\|_{H}^2.
\end{equation*} 
Additionally, every $h_1\in H_1$ can be expressed as $h_1(.)=\sum_{j=0}^N\alpha_j K(.,t_{j})$. From the reproducing property $\left(K(.,t_{j}),K(.,t_{i})\right)_{H}=K(t_{i},t_{j})$, we get
\begin{equation*}
\| h_1\|_{H}^2=\left(h_1,h_1\right)_{H}=\sum_{i,j=0}^N\alpha_i\alpha_jK(t_{i},t_{j})=\alpha^{\top}\Gamma_N\alpha.
\end{equation*}
As $h_1(t_{i})=\sum_{j=0}^N\alpha_j K(t_{i},t_{j})$ for $i=0,\ldots,N$, we have $\alpha=\Gamma_N^{-1}c_{h_1}$ and
\begin{equation*}
\| h_1\|_{H}^2=c_{h_1}^{\top}\Gamma_N^{-1}\Gamma_N\Gamma_N^{-1}c_{h_1}=c_{h_1}^{\top}\Gamma_N^{-1}c_{h_1}.
\end{equation*}
Since $h_0\in H_0$, $c_{h_1}=c_h$ and $\|h_1\|_H^2=c_{h}^{\top}\Gamma_N^{-1}c_{h}=\| \pi_N(h)\|_{H_N}^2$, which completes the proof  (\ref{stab}). \\
The caracterisation (\ref{newdefH}) of $H$ and the property (\ref{normH}) has been proposed by Parzen in \cite{parzen1959}. In \cite{bay1}, Theorem 3.1 p 1587, Bay, Grammont and Maatouk give a proof easier to understand in the framework of this paper. 
\end{proof}

\begin{prop} {\bf Isometric property of $\rho_N$} \\
For all $v_N \in H_N$, we have
\begin{equation}\label{normrhon}
\| \rho_N (v_N)\|_{H}^2=c_{v_N}^{\top}\Gamma_N^{-1}c_{v_N}, 
\end{equation}
 
The operator   $\rho_N$ is an isometry from $H_N$ into $H$, i.e.
\begin{equation}\label{iso}
\forall v_N \in H_N, \qquad  \| \rho_N(v_N)\|^2_{H}=\| v_N\|^2_{H_N}.
\end{equation} 

$\forall h \in H$,
\begin{equation}\label{consistance}
\Vert \rho_N(\pi_N(h))-h \Vert_H  \underset{N\to +\infty} \rightarrow 0.   
\end{equation} 
\end{prop}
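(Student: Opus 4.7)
The plan is to handle the three claims in order, leveraging the reproducing property of $K$ in $H$ and the orthogonal projection interpretation of $\rho_N\circ\pi_N$ noted just before Proposition \ref{rkhsHN}.

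First I would establish (\ref{normrhon}) by direct computation. By definition, $\rho_N(v_N)=\sum_{i=0}^N \lambda_i K(\cdot,t_i)$ with $\Gamma_N\Lambda=c_{v_N}$, so the reproducing property $\left(K(\cdot,t_i),K(\cdot,t_j)\right)_H = K(t_i,t_j)$ gives
\begin{equation*}
\|\rho_N(v_N)\|_H^2 \;=\; \sum_{i,j=0}^N \lambda_i\lambda_j K(t_i,t_j) \;=\; \Lambda^{\top}\Gamma_N\Lambda.
\end{equation*}
Substituting $\Lambda=\Gamma_N^{-1}c_{v_N}$, which is well defined by (\ref{H5}), yields $\|\rho_N(v_N)\|_H^2 = c_{v_N}^{\top}\Gamma_N^{-1}c_{v_N}$. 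The isometry property (\ref{iso}) is then immediate from (\ref{normHN}): both norms equal $c_{v_N}^{\top}\Gamma_N^{-1}c_{v_N}$.

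For the convergence (\ref{consistance}), I would use the fact, already recorded in the excerpt, that $\rho_N\circ\pi_N$ coincides with the orthogonal projection $\Pi_N$ from $H$ onto the finite dimensional subspace $H_1^N=\Span\{K(\cdot,t_j),\ j=0,\ldots,N\}$. Hypothesis (\ref{H2}) gives $H_1^N\subset H_1^{N+1}$, so the sequence $(\Pi_N)_N$ is a monotone sequence of orthogonal projections; by the standard Hilbert space fact, it converges strongly to the orthogonal projection onto the closed subspace $V:=\overline{\bigcup_N H_1^N}$. It therefore suffices to show $V=H$.

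The key step—and the only nontrivial one—is this density argument. Suppose $h\in H$ is orthogonal to $V$; then for every $N$ and every $t_j\in\Delta_N$, the reproducing property yields $h(t_j)=(h,K(\cdot,t_j))_H=0$. Since $\bigcup_N\Delta_N$ is dense in $[0,1]$ (because $\delta_N\to 0$) and since $h\in H\subset E=C([0,1],\RR)$ is continuous, we conclude $h\equiv 0$. Hence $V^\perp=\{0\}$ and $V=H$, which gives $\Pi_N h\to h$ in $H$ for every $h\in H$, i.e.\ (\ref{consistance}).

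The main obstacle I anticipate is purely the density step, and it is mild: it rests on the continuous inclusion $H\hookrightarrow E$ proved at the start of Section 2 (so pointwise nullity on a dense set forces $h\equiv 0$) together with the assumption $\delta_N\to 0$. The algebraic parts (\ref{normrhon}) and (\ref{iso}) are routine consequences of the reproducing property and the definition (\ref{proscalHN}) of $\|\cdot\|_{H_N}$.
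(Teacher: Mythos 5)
Your proofs of (\ref{normrhon}) and (\ref{iso}) are exactly the paper's: expand $\|\sum_i\lambda_i K(\cdot,t_i)\|_H^2=\Lambda^{\top}\Gamma_N\Lambda$ via the reproducing property and substitute $\Lambda=\Gamma_N^{-1}c_{v_N}$. For (\ref{consistance}) you take a genuinely different route. The paper expands $\Vert \rho_N(\pi_N(h))-h\Vert_H^2$ directly and, using (\ref{iso}) and $(h,K(\cdot,t_i))_H=h(t_i)$, reduces it to the identity $\Vert \rho_N(\pi_N(h))-h\Vert_H^2=\|h\|_H^2-\|\pi_N(h)\|_{H_N}^2$, then invokes (\ref{normH}) of Proposition \ref{stability} (which is itself only cited, from Parzen and \cite{bay1}). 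You instead identify $\rho_N\circ\pi_N$ with the orthogonal projection $\Pi_N$ onto $H_1^N=\Span\{K(\cdot,t_j),\,j=0,\dots,N\}$, use (\ref{H2}) to get a nested family, apply strong convergence of monotone projections, and close with a density argument ($h\perp V\Rightarrow h=0$ on the dense set $\bigcup_N\Delta_N$, hence $h\equiv 0$ by continuity of elements of $H$). Both arguments are valid. Yours is more self-contained: it does not rest on the externally cited limit (\ref{normH}) but actually re-derives it as a byproduct, since $\|\Pi_N h\|_H^2=\|\pi_N(h)\|_{H_N}^2\to\|h\|_H^2$; it also makes explicit exactly where (\ref{H2}), the condition $\delta_N\to 0$, and the inclusion $H\subset C([0,1])$ are used. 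The price is that it leans on the projection identification, which the paper only asserts in passing; you should add the one-line verification that the normal equations for projecting $h$ onto $H_1^N$ are precisely $\Gamma_N\Lambda=c_h$, which is the defining system of $\rho_N(\pi_N(h))$. The paper's computation, by contrast, is shorter once (\ref{normH}) is granted, but leaves the real analytic content buried in a citation.
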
 
 
\begin{proof}
We have 
\begin{equation*}
\| \rho_N( v_N)\|_{H}^2=(\rho_N (v_N),\rho_N (v_N))_H= \sum_{i=0}^N \sum_{j=0}^N \alpha_j \alpha_i \left( K(.,t_{i}) K(.,t_{j})\right)_H.
\end{equation*}
Since $\left(K(.,t_{i}),K(.,t_{j})\right)_H =K(t_{i},t_{j})$, we obtain
\begin{equation*}
\| \rho_N (v_N)\|_{H}^2= \sum_{i=0}^N \sum_{j=0}^N \alpha_j \alpha_i K(t_{N,i},t_{N,j})= \alpha^{{\top}}\Gamma_N \alpha.
\end{equation*}
As $\alpha=\Gamma_N^{-1}c_{h_N}$ and $\Gamma_N$ is symmetric, we obtain (\ref{normrhon}). We have  $v_N=c_{v_N}^{\top}\varphi(x)$. According to the definition of the inner product in $H_N$, we have  
\begin{eqnarray*}
\| v_N\|^2_{H_N}=\left(  v_N  , v_N  \right)_{H_N}=c_{v_N}^{\top}\Gamma_N^{-1} c_{v_N}. 
\end{eqnarray*} 
Using (\ref{normrhon}), we obtain $\| \rho_N(v_N)\|^2_{H}=\| v_N\|^2_{H_N}$. \\
We have 
\begin{eqnarray*}
\Vert \rho_N(\pi_N(h))-h \Vert_H &=& (\rho_N(\pi_N(h))-h,\rho_N(\pi_N(h))-h)_H \\
&=& \| \rho_N(\pi_N(h))\|^2_{H} + \|h\|^2_{H} -2(h, \rho_N(\pi_N(h)) \\
\end{eqnarray*}
  Thanks to (\ref{iso}), and according to (\ref{QN}) with 
  $\Lambda=\Gamma_N^{-1}c_{h}.$
\begin{eqnarray*}
\Vert \rho_N(\pi_N(h))-h \Vert_H &=&  
  \|  \pi_N(h)\|^2_{H_N} + \|h\|^2_{H} -2(h, \displaystyle\sum_{i=0}^{N} \lambda_i K(.,t_i)  ) \\
  &=&  
  \|  \pi_N(h)\|^2_{H_N} + \|h\|^2_{H} -2 \displaystyle\sum_{i=0}^{N} \lambda_i h(t_i)   \\
  &=&  
  c_{h}^{\top}\Gamma_N^{-1} c_{h} + \|h\|^2_{H} -2c_{h}^{\top}\Gamma_N^{-1} c_{h}   \\
  &=&  
    \|h\|^2_{H} - c_{h}^{\top}\Gamma_N^{-1} c_{h}   \\
     &=&  
    \|h\|^2_{H} -  \|  \pi_N(h)\|^2_{H_N}   \\
\end{eqnarray*}
And (\ref{consistance}) comes from (\ref{normH}).

\end{proof}

Now, we can formulate the approximation problem~:\\
Let us define the function $J_N:H_N\rightarrow\RR^+$: $\forall v_N \in H_N$, 
\begin{equation}\label{JN}
J_N(v_N):=\Vert v_N\Vert^2_{H_N}+ \frac{1}{\sigma^2}\Vert {\cal I}_n(v_N)- \boldsymbol{y}^{\top} \Vert^2_n
\end{equation}  
where  $ \boldsymbol{y}=(y_1,\ldots,y_n)$   and $\Vert .\Vert_n$ the euclidian norm in $\RR^n$. 

\begin{prop}\label{propJN} $\forall u_N \in H_N, \ \forall v_N \in H_N$, we have
\begin{equation}\label{egalJ_N}
J_N(tu_N+(1-t)v_N)=tJ_N(u_N)+(1-t)J_N(v_N) -t(1-t)\left(\Vert u_N-v_N\Vert^2_{H_N}+\frac{1}{\sigma^2}\Vert {\cal I}_n(u_N)-{\cal I}_n(u_N)\Vert^2_n\right).
\end{equation}
So that $J_N$ is strongly convex~:
\begin{equation}\label{fortconv}
J_N(tu_N+(1-t)v_N)\leq tJ_N(u_N)+(1-t)J_N(v_N) -t(1-t)\Vert u_N-v_N\Vert_{H_N}.
\end{equation}
Moreover $J_N$ is Fr\'echet differentiable and 
$$\displaystyle\lim_{\Vert v_N\Vert_{H_N} \mapsto+\infty} J_N(v_N)=+\infty $$
Moreover 
\begin{equation}\label{JJ_N}
J_N(v_N)=J(\rho_N(v_N)).
\end{equation}
\end{prop}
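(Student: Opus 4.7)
The plan is to treat the four assertions in the order they appear, since each one reduces to a short inner-product computation once the right identity or earlier result is invoked.

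For the polarization-type identity \eqref{egalJ_N}, I would work separately on the two pieces of $J_N$. Both $v_N \mapsto \|v_N\|_{H_N}^2$ and $v_N \mapsto \frac{1}{\sigma^2}\|\mathcal{I}_n(v_N)-\boldsymbol{y}^\top\|_n^2$ are of the form $\|L v_N - b\|^2$ for a linear map $L$ and a vector $b$ (with $b=0$ for the first one and $L=\mathrm{Id}$, and $b=\boldsymbol{y}^\top$, $L=\mathcal{I}_n$ for the second). For any such squared seminorm coming from an inner product one has the classical identity
\begin{equation*}
\|L(tu+(1-t)v)-b\|^2 = t\|Lu-b\|^2 + (1-t)\|Lv-b\|^2 - t(1-t)\|Lu-Lv\|^2,
\end{equation*}
obtained by writing $tu+(1-t)v = v + t(u-v)$ and expanding. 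Summing the two specializations gives exactly \eqref{egalJ_N}. The strong convexity statement \eqref{fortconv} then follows by discarding the nonnegative term $\frac{t(1-t)}{\sigma^2}\|\mathcal{I}_n(u_N)-\mathcal{I}_n(v_N)\|_n^2$ in \eqref{egalJ_N}.

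For the differentiability and coercivity claims, I would simply note that, once $v_N$ is written in coordinates $c_{v_N} \in \mathbb{R}^{N+1}$, $J_N$ becomes the polynomial function
\begin{equation*}
c \mapsto c^\top \Gamma_N^{-1} c + \tfrac{1}{\sigma^2}\|\mathcal{I}_n(v_N)-\boldsymbol{y}^\top\|_n^2,
\end{equation*}
which is quadratic, hence $C^\infty$ and Fréchet differentiable. Coercivity is immediate since $J_N(v_N) \geq \|v_N\|_{H_N}^2$.

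The last identity \eqref{JJ_N} is where the construction of $\rho_N$ really earns its keep, and is the part I would emphasize. The isometry relation \eqref{iso} already provides $\|\rho_N(v_N)\|_H^2 = \|v_N\|_{H_N}^2$, so it remains to check that the data-fit terms match, i.e.\ that $\rho_N(v_N)(x_i) = v_N(x_i)$ for every data point $x_i$. By definition $\rho_N(v_N) = \sum_{j=0}^N \lambda_j K(\cdot,t_j)$ with $\Gamma_N \Lambda = c_{v_N}$, hence evaluating at any node $t_k$ gives
\begin{equation*}
\rho_N(v_N)(t_k) = \sum_{j=0}^N \lambda_j K(t_k,t_j) = (\Gamma_N \Lambda)_k = (c_{v_N})_k = v_N(t_k),
\end{equation*}
so $\rho_N(v_N)$ and $v_N$ coincide on the full partition $\Delta_N$. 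By hypothesis \eqref{H3}, each $x_i$ lies in $\Delta_N$, so the two functions also agree on the data points and $\mathcal{I}_n(\rho_N(v_N)) = \mathcal{I}_n(v_N)$. Combining this with \eqref{iso} yields $J(\rho_N(v_N)) = J_N(v_N)$. The only subtle step is this last one, where \eqref{H3} is used to transfer the equality from nodes to data points; the rest is routine bookkeeping with the inner product on $H_N$.
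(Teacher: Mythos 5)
Your proposal is correct and follows essentially the same route as the paper: the polarization identity applied separately to the $H_N$-norm term and the data-fit term for \eqref{egalJ_N}, dropping the nonnegative term for \eqref{fortconv}, and the isometry \eqref{iso} combined with $\mathcal{I}_n(\rho_N(v_N))=\mathcal{I}_n(v_N)$ (via (\ref{H3})) for \eqref{JJ_N}. The only difference is that you explicitly verify $\rho_N(v_N)(t_k)=(\Gamma_N\Lambda)_k=v_N(t_k)$ at the nodes, a detail the paper dismisses as ``easy to notice.''
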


\begin{proof}

As $2(u_N,v_N)_{H_N}=\Vert u_N\Vert^2_{H_N}+\Vert v_N\Vert^2_{H_N}-\Vert u_N-v_N\Vert^2_{H_N}$, we have
\begin{eqnarray*}
J_N(tu_N+(1-t)v_N) &=& t^2\Vert u_N\Vert^2_{H_N}+(1-t)^2\Vert v_N\Vert^2_{H_N}+2t(1-t)(u_N,v_N)_{H_N}
 +\frac{1}{\sigma^2}\Vert {\cal I}_n(tu_N+(1-t)v_N)- \boldsymbol{y}^{\top} \Vert^2_n \\
 &=& t\Vert u_N\Vert^2_{H_N}+(1-t)\Vert v_N\Vert^2_{H_N}-t(1-t)\Vert u_N-v_N\Vert^2_{H_N}+
 \frac{1}{\sigma^2}\Vert {\cal I}_n(tu_N+(1-t)v_N)- \boldsymbol{y}^{\top} \Vert^2_n \\
\end{eqnarray*}
As $2 ({\cal I}_n(u_N)-\boldsymbol{y}^{\top} , {\cal I}_n(v_N)-\boldsymbol{y}^{\top})_n= \Vert {\cal I}_n(u_N)-\boldsymbol{y}^{\top}\Vert^2_n + \Vert {\cal I}_n(v_N)-\boldsymbol{y}^{\top}\Vert^2_n-\Vert {\cal I}_n(u_N)-{\cal I}_n(u_N)\Vert^2_n $
\begin{eqnarray*}
\frac{1}{\sigma^2}\Vert {\cal I}_n(tu_N+(1-t)v_N)- \boldsymbol{y}^{\top} \Vert^2_n &=& \frac{1}{\sigma^2}t^2\Vert {\cal I}_n(u_N)- \boldsymbol{y}^{\top} \Vert^2_n +
\frac{1}{\sigma^2}(1-t)^2\Vert {\cal I}_n(v_N)- \boldsymbol{y}^{\top} \Vert^2_n \\
&+& 2t(1-t)({\cal I}_n(u_N)-\boldsymbol{y}^{\top} , {\cal I}_n(v_N)-\boldsymbol{y}^{\top})_n\\
&=& t\frac{1}{\sigma^2}\Vert {\cal I}_n(u_N)- \boldsymbol{y}^{\top} \Vert^2_n +
(1-t)\frac{1}{\sigma^2}\Vert {\cal I}_n(v_N)- \boldsymbol{y}^{\top} \Vert^2_n \\
&-& t(1-t)\frac{1}{\sigma^2}\Vert {\cal I}_n(u_N)-{\cal I}_n(u_N)\Vert^2_n\\
\end{eqnarray*}
So that 
\begin{eqnarray*}
J_N(tu_N+(1-t)v_N) &=&   tJ_N(u_N)+(1-t)J_N(v_N)-t(1-t)  \Vert u_N-v_N\Vert^2_{H_N}\\
&-& t(1-t)\frac{1}{\sigma^2}\Vert {\cal I}_n(u_N)-{\cal I}_n(u_N)\Vert^2_n
\end{eqnarray*}
It is easy to notice that ${\cal I}_N(\rho_N(v_N))={\cal I}_N(v_N)$, so that, as  (\ref{H3}) is satisfied, then 
$${\cal I}_n(\rho_N(v_N))={\cal I}_n(v_N)$$
so that, thanks to (\ref{iso})
\begin{eqnarray*}
J(\rho_N(v_N)) &=&     \Vert \rho_N(v_N) \Vert^2_{H }+\frac{1}{\sigma^2}\Vert {\cal I}_n(\rho_N(v_N))- \boldsymbol{y}^{\top} \Vert^2_n \\
&=& \Vert  v_N \Vert^2_{H_N} +\frac{1}{\sigma^2}\Vert {\cal I}_n( v_N)- \boldsymbol{y}^{\top} \Vert^2_n \\
&=& J_N(v_N)
\end{eqnarray*}
 
\end{proof}

The constraints space is simply defined as  
\begin{equation}\label{CN}
C_N=H_N\cap C
\end{equation}

Then the discretized problem is defined by
\begin{equation}\label{pbdis}\tag{$P_N$}
\min_{u_N\in  C_N } J_N(u_N)
\end{equation}
\begin{theorem}
Under the   assumptions (\ref{H1}) to (\ref{H5}),\\
   (\ref{pbdis})   has a unique solution $\widehat{u}_N$.
\end{theorem}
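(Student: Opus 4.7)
The plan is to exhibit $(P_N)$ as the minimization of a strongly convex, coercive, continuous function over a nonempty, closed, convex subset of the finite-dimensional Hilbert space $H_N$, and then invoke the standard existence–uniqueness result from finite-dimensional convex analysis.

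First I would verify that the feasible set $C_N = H_N \cap C$ has the right properties. Convexity is immediate since $H_N$ is a linear subspace and $C$ is convex. For closedness, Proposition \ref{rkhsHN} gives the continuous embedding $H_N \hookrightarrow E$ (with constant $c$ independent of $N$), so $C_N$ is the preimage of the $E$-closed set $C$ under a continuous injection into $H_N$, hence closed in $H_N$. Non-emptiness is the only place assumptions \eqref{H1} and \eqref{H4} both enter: by \eqref{H1} pick any $h \in H \cap C$; then $\pi_N(h) \in H_N$ by construction and $\pi_N(h) \in \pi_N(C) \subset C$ by \eqref{H4}, giving $\pi_N(h) \in C_N$.

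Next, all the needed analytic properties of $J_N$ are already recorded in Proposition \ref{propJN}: $J_N$ is Fréchet differentiable (hence continuous), strongly convex in the sense of \eqref{fortconv}, and coercive on $H_N$. Since $H_N$ is finite-dimensional, continuity plus coercivity gives that for any $M > \inf_{C_N} J_N$ the sublevel set $\{v_N \in C_N : J_N(v_N) \le M\}$ is closed and bounded, hence compact, so $J_N$ attains its infimum on $C_N$; this yields existence of a minimizer $\widehat{u}_N$.

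For uniqueness I would invoke strong convexity directly: if $u_N$ and $v_N$ were two distinct minimizers in $C_N$, then $\tfrac{1}{2}(u_N+v_N) \in C_N$ by convexity of $C_N$, and \eqref{fortconv} with $t = 1/2$ gives
\begin{equation*}
J_N\!\left(\tfrac{1}{2}u_N+\tfrac{1}{2}v_N\right) \le \tfrac{1}{2}J_N(u_N)+\tfrac{1}{2}J_N(v_N) - \tfrac{1}{4}\Vert u_N-v_N\Vert_{H_N} < \min_{C_N} J_N,
\end{equation*}
a contradiction. I do not expect any substantive obstacle; the only point that requires care is non-emptiness of $C_N$, which is precisely why \eqref{H4} (preservation of $C$ under $\pi_N$) is imposed alongside \eqref{H1}. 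Assumption \eqref{H5} plays no direct role here but is implicit in having $H_N$ well-defined as a RKHS with the inner product \eqref{proscalHN}, so the whole framework is consistent.
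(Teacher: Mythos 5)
Your proof is correct and follows essentially the same route as the paper's: establish that $C_N$ is a nonempty closed convex subset of $H_N$ (using \eqref{H1} and \eqref{H4} exactly as you do), then conclude by the strong convexity, differentiability and coercivity of $J_N$ recorded in Proposition \ref{propJN}. The paper's proof is just a terser version of yours, leaving the closedness of $C_N$ and the existence/uniqueness mechanics implicit.
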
 
 \begin{proof}
 Let $g\in H\cap C $, then, thanks to hypothesis (\ref{H4}), 
 $\pi_N(g) \in C_N=H_N \cap C$. So that $C_N$ is a nonempty closed convex of $H_N$. According to the properties of $J_N$ (Proposition \ref{propJN}), we have the conclusion. 
 \end{proof}

\section{Convergence result}
The aim of this   paragraphe is to prove that, if $\widehat{u}_N$ is the solution of (\ref{pbdis}) and $\widehat{u}  $ the solution of (\ref{prob}),  then 
$$ \lim\limits_{N \rightarrow +\infty} \widehat{u}_N =\widehat{u}    $$ 
 
 We will prove two intermediate results leading to the convergence result. 
 \begin{prop}\label{convJ}
 Under (\ref{H1}) (\ref{H2}) and (\ref{H3})
 \begin{eqnarray}\label{int1}
& \lim\limits_{N \rightarrow +\infty}& J_N(\pi_N(\widehat{u}  ))=J(\widehat{u}  )\\
& \lim\limits_{N \rightarrow +\infty}&J_N( \widehat{u}_N)=J(\widehat{u}  )
\end{eqnarray}
 \end{prop}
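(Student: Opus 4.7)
The proof breaks naturally into two limits.

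For the first limit $J_N(\pi_N(\widehat{u})) \to J(\widehat{u})$, I would expand $J_N(\pi_N(\widehat{u}))$ via (\ref{JN}) and examine each term separately. By (H3), every data point $x_i$ lies in $\Delta_N$, so the interpolation property gives $\pi_N(\widehat{u})(x_i)=\widehat{u}(x_i)$, hence ${\cal I}_n(\pi_N(\widehat{u}))={\cal I}_n(\widehat{u})$; the data-fidelity term of $J_N(\pi_N(\widehat{u}))$ is thus constant in $N$ and equals $\frac{1}{\sigma^2}\sum_i(\widehat{u}(x_i)-y_i)^2$. The Hilbertian term $\|\pi_N(\widehat{u})\|_{H_N}^2$ tends to $\|\widehat{u}\|_H^2$ by the characterization (\ref{normH}) of Proposition~\ref{stability}, and summing the two pieces yields the claim.

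For the second limit, I would first establish the upper bound $\limsup_N J_N(\widehat{u}_N) \leq J(\widehat{u})$: since $\widehat{u}\in H\cap C$ and (H4) is in force, $\pi_N(\widehat{u})\in H_N\cap C=C_N$ is feasible in $(P_N)$, so minimality of $\widehat{u}_N$ gives $J_N(\widehat{u}_N)\leq J_N(\pi_N(\widehat{u}))$, and the first limit closes this direction.

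The matching $\liminf$ bound is the heart of the proof. I would set $\widetilde{u}_N:=\rho_N(\widehat{u}_N)\in H$, so that (\ref{JJ_N}) yields $J(\widetilde{u}_N)=J_N(\widehat{u}_N)$, and the upper bound step forces $\{\widetilde{u}_N\}$ to be bounded in $H$. Extracting a weakly convergent subsequence $\widetilde{u}_{N_k}\rightharpoonup u^*\in H$, weak lower semicontinuity of $\|\cdot\|_H^2$ combined with continuity of point evaluation ($\widetilde{u}_{N_k}(x_i)\to u^*(x_i)$ since $K(\cdot,x_i)\in H$) delivers $J(u^*)\leq \liminf_k J_{N_k}(\widehat{u}_{N_k})$. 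If $u^*\in C$, optimality of $\widehat{u}$ gives $J(\widehat{u})\leq J(u^*)$, and a standard subsequence-of-subsequences argument promotes this to convergence of the full sequence to $J(\widehat{u})$.

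The main obstacle is verifying $u^*\in C$: the kernel-combinations $\widetilde{u}_N$ generally do not lie in $C$, although $\widehat{u}_N=\pi_N(\widetilde{u}_N)$ does by construction. I would exploit continuity of $K$ on $[0,1]^2$ to deduce equicontinuity of any $H$-bounded family from
$$|u(x)-u(y)|^2 \leq \|u\|_H^2\,\bigl(K(x,x)-2K(x,y)+K(y,y)\bigr),$$
so that by Arzel\`a--Ascoli a further subsequence satisfies $\widetilde{u}_{N_k}\to u^*$ uniformly on $[0,1]$. Since $\pi_N$ is non-expansive on $E$ and $\pi_{N_k}(u^*)\to u^*$ in $E$ by (\ref{PN}), the triangle estimate
$$\|\widehat{u}_{N_k}-u^*\|_E \leq \|\widetilde{u}_{N_k}-u^*\|_E + \|\pi_{N_k}(u^*)-u^*\|_E$$
gives $\widehat{u}_{N_k}\to u^*$ in $E$, and closure of $C$ in $E$ forces $u^*\in C$, which completes the argument.
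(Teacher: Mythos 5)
Your proposal is correct and its overall architecture coincides with the paper's: feasibility of $\pi_N(\widehat{u})$ in $(P_N)$ plus the stability bound (\ref{stab}) and (\ref{H3}) give the sandwich $J_N(\widehat{u}_N)\leq J_N(\pi_N(\widehat{u}))\leq J(\widehat{u})$, and the matching lower bound is obtained by lifting $\widehat{u}_N$ to $H$ via $\rho_N$ (your $\widetilde{u}_N$ is the paper's $h^N$), extracting a weak limit $u^*$, using weak lower semicontinuity of $J$ through (\ref{JJ_N}), and showing $u^*\in C$. You depart from the paper in two places. First, you prove the limit for $J_N(\pi_N(\widehat{u}))$ directly from (\ref{normH}), whereas the paper extracts both limits simultaneously from the $\liminf$/$\limsup$ chain; your route is cleaner and decouples the two statements. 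Second, and more substantively, the identification $u^*\in C$ is done differently. The paper uses the reproducing property to get nodal convergence $h^{N_k}(t_i)\to h^*(t_i)$, hence $\pi_N(h^{N_k})\to\pi_N(h^*)$ in the finite-dimensional $H_N$, then invokes the nesting (\ref{H2}) to write $\pi_N(h^{N_k})=\pi_N(\widehat{u}_{N_k})$ for $N_k\geq N$ and (\ref{H4}) to keep these in $C$, and finally lets $N\to\infty$. You instead upgrade weak convergence to uniform convergence via kernel-induced equicontinuity and Arzel\`a--Ascoli, then use non-expansiveness of $\pi_N$ in the sup norm to conclude $\widehat{u}_{N_k}\to u^*$ in $E$. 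Your argument avoids (\ref{H2}) and the second use of (\ref{H4}) at this step (you only need $\widehat{u}_{N_k}\in C_{N_k}\subset C$), but it requires that $(x,y)\mapsto K(x,x)-2K(x,y)+K(y,y)$ tend to zero uniformly as $|x-y|\to 0$, i.e.\ joint (uniform) continuity of $K$ on $[0,1]^2$; the paper never states this explicitly, although it is implicit in its own manipulations (e.g.\ the use of $\max_{x,x'}|K(x,x')|$ in Proposition \ref{rkhsHN}), so this is a mild additional hypothesis rather than a gap. Note also that both your proof and the paper's actually use (\ref{H4}) (and (\ref{H5})), even though the proposition as stated lists only (\ref{H1})--(\ref{H3}); you were right to flag (\ref{H4}) explicitly.
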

\begin{proof}
Let us set $$h^N:=\rho_N(\widehat{u}_N)\in H.$$
Using (\ref{JJ_N}), as $\pi_N(\widehat{u}  )\in H_N\cap C$, we have 
\begin{equation}\label{ine}
J(h^N)=J(\rho_N(\widehat{u}_N))=J_N(\widehat{u}_N) \leq J_N(\pi_N(\widehat{u}  ))\leq J(\widehat{u}  )
\end{equation}
Let us prove the last inequality:
 $J_N(\pi_N(\widehat{u}  ))=\Vert\pi_N(\widehat{u}  )\Vert_{H_N}+\displaystyle\frac{1}{\sigma^2}\Vert  {\cal I}_n(\pi_N( \widehat{u}  ))- y \Vert^2_n.  $\\
  Thanks to (\ref{H3}),  ${\cal I}_n(\pi_N(\widehat{u}  ))={\cal I}_n(\widehat{u}  )$ so that, thanks to (\ref{stab}),    $$J_N(\pi_N(\widehat{u}  ))\leq \Vert\ \widehat{u}   \Vert_{H}+\displaystyle\frac{1}{\sigma^2}\Vert {\cal I}_n(\widehat{u}  )- y \Vert^2_n=J(\widehat{u}  ) $$ 
As $\Vert h^N\Vert_{H} \leq J(h^N) \leq J(\widehat{u}  ) $, then the sequence $(h^N)_{N\in \NN}$ is bounded in $H$ so that, by weak compactness in Hilbert space,  there exists a sub-sequence $(h^{N_k})_{k \in \NN}$ and  $h^* \in H$ such that    
\begin{equation}\label{weakcompact}
 h^{N_k}\underset{k\to +\infty}\rightharpoonup h^*\in H, \qquad \text{(weak convergence)}.
\end{equation}
  
As $H$ is a RKHS with kernel $K$, for all $t_i \in  \Delta_N, \quad K_(.,t_i)\in H$ and
 $$(h^{N_k},K(.,t_i))_H=h^{N_k}(t_i) \underset{k\to +\infty}\rightharpoonup
(h^{*},K(.,t_i))_H=h^{*}({t_i}).$$

Therefore for all $N \geq 1$,  $\pi_N(h^{N_k})\underset{k\to +\infty}\rightarrow \pi_N(h^*) $ in the finite dimensional space $H_N$.\\
 According to   assumption (\ref{H2}),  as far as $N_k\geq N$,  ${\cal I}_{N}(h^{N_k})={\cal I}_{N}(\rho_{N_k}(\widehat{u}_{N_k} ))={\cal I}_{N}(\widehat{u}_{N_k} )$ and
$$\pi_{N}(h^{N_k})=\pi_{N}(\rho_{N_k}(\widehat{u}_{N_k} ))=\pi_{N}(\widehat{u}_{N_k} ),$$
so that 
$$\pi_{N}(\widehat{u}_{N_k} ) \underset{k\to +\infty}\rightarrow \pi_N(h^*) \quad \mbox{in} \ H_N$$
 
As $H_N$ is an Hibertian subspace of $E$ ( inequality (\ref{HN}) of   proposition \ref{rkhsHN}.), 
$$\pi_{N}(\widehat{u}_{N_k} ) \underset{k\to +\infty}\rightarrow \pi_N(h^*) \quad \mbox{in} \  E$$

Under  (\ref{H4}), $\pi_N(\widehat{u}_{N_k}  ) \in  C$  and $C$ is closed in E, so that $\forall N$,  $$\pi_N(h^*) \in   C.$$
$C$ is closed in $E$ and  $\pi_N(h^*) \underset{N\to +\infty}\rightarrow h^*$ in $E$, then 
$$h^* \in C \quad \mbox{and} \quad J(\widehat{u}  ) \leq J(h^*) $$
Then, as $J$ is convex and lower semi continuous and $h^{N_k}\underset{k\to +\infty}\rightharpoonup h^*\in H$, using (\ref{ine}),
$$J(\widehat{u}  ) \leq J(h^*)\leq \liminf_k J(h^{N_k})=\liminf_k J_{N_k}(\widehat{u}_{N_k} ) \leq \limsup_k J_{N_k}(\widehat{u}_{N_k} ) \leq J(\widehat{u}  ) $$ so that 
$$ J_{N_k}(\widehat{u}_{N_k} ) \underset{k\to +\infty}\rightarrow  J(\widehat{u}  ) $$

(\ref{ine}) implies that 
\begin{equation*} 
J(\widehat{u}  ) \leq J(h^*)\leq \liminf_k J(h^{N_k})=\liminf_k J_{N_k}(\widehat{u}_{N_k} ) \leq  \liminf_k J_{N_k}(\pi_{N_k}(\widehat{u}  )) \leq  \limsup_k J_{N_k}(\pi_{N_k}(\widehat{u}  )) \leq J(\widehat{u}  )
\end{equation*}

so that 
$$ J_{N_k}(\pi_{N_k}\widehat{u}  ) \underset{k\to +\infty}\rightarrow  J(\widehat{u}  ) $$
As the sequences $(J_{N}(\widehat{u}_{N}))_{N \in \NN}$ and $(J_{N}(\pi_{N}\widehat{u}  ))_{N \in \NN}$ are in the compact set $[0,J(\widehat{u}  )]$, then the results hold. 
\end{proof}

\begin{prop}\label{intermed}
 \begin{eqnarray}\label{int2}
\lim\limits_{N \rightarrow +\infty}  \Vert \pi_N(\widehat{u}  ))-\widehat{u}_N\Vert_{H_N} =0
\end{eqnarray}
 \end{prop}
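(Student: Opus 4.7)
The plan is to exploit the strong convexity of $J_N$ (the key identity \eqref{egalJ_N}) applied at $u_N=\pi_N(\widehat{u})$ and $v_N=\widehat{u}_N$, together with the fact that both $J_N(\pi_N(\widehat{u}))$ and $J_N(\widehat{u}_N)$ converge to $J(\widehat{u})$ by Proposition~\ref{convJ}.

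First, I would observe that $C_N=H_N\cap C$ is convex: $\pi_N(\widehat{u})\in C_N$ by hypothesis \eqref{H4}, and $\widehat{u}_N\in C_N$ by definition, so their midpoint $w_N:=\tfrac{1}{2}\pi_N(\widehat{u})+\tfrac{1}{2}\widehat{u}_N$ also lies in $C_N$. Since $\widehat{u}_N$ is the minimizer of $J_N$ on $C_N$, we have $J_N(\widehat{u}_N)\leq J_N(w_N)$.

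Next, I would apply the equality \eqref{egalJ_N} at $t=\tfrac{1}{2}$ with $u_N=\pi_N(\widehat{u})$ and $v_N=\widehat{u}_N$, which gives
\begin{equation*}
J_N(w_N)=\tfrac{1}{2}J_N(\pi_N(\widehat{u}))+\tfrac{1}{2}J_N(\widehat{u}_N)-\tfrac{1}{4}\Bigl(\Vert \pi_N(\widehat{u})-\widehat{u}_N\Vert^2_{H_N}+\tfrac{1}{\sigma^2}\Vert {\cal I}_n(\pi_N(\widehat{u}))-{\cal I}_n(\widehat{u}_N)\Vert^2_n\Bigr).
\end{equation*}
Combining this with the inequality $J_N(\widehat{u}_N)\leq J_N(w_N)$ and dropping the nonnegative second quadratic term yields the clean bound
\begin{equation*}
\tfrac{1}{4}\Vert \pi_N(\widehat{u})-\widehat{u}_N\Vert^2_{H_N}\leq \tfrac{1}{2}\bigl(J_N(\pi_N(\widehat{u}))-J_N(\widehat{u}_N)\bigr).
\end{equation*}

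Finally, Proposition~\ref{convJ} tells us that both $J_N(\pi_N(\widehat{u}))\to J(\widehat{u})$ and $J_N(\widehat{u}_N)\to J(\widehat{u})$, so the right-hand side tends to $0$. This forces $\Vert \pi_N(\widehat{u})-\widehat{u}_N\Vert_{H_N}\to 0$, which is \eqref{int2}. There is essentially no obstacle: the whole argument is a routine application of the well-known ``strong-convexity plus optimality'' trick, and the only substantive ingredient — the convergence of the two energy sequences — has already been done in the previous proposition.
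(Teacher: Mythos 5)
Your argument is correct, and it rests on the same two pillars as the paper's proof — the strong convexity of $J_N$ and the convergence of the two energy sequences from Proposition~\ref{convJ} — but the way you extract the quadratic growth estimate is genuinely different. The paper uses the first-order form of strong convexity, $J_{N}(\pi_N(\widehat{u}))-J_{N}(\widehat{u}_{N}) \geq (J_{N}'(\widehat{u}_{N}),\pi_{N}(\widehat{u})-\widehat{u}_{N})_{H_N}+\Vert \pi_N(\widehat{u})-\widehat{u}_N\Vert^2_{H_N}$, and then kills the gradient term with the variational inequality $(J_{N}'(\widehat{u}_{N}),\pi_{N}(\widehat{u})-\widehat{u}_{N})_{H_N}\geq 0$ that characterizes the minimizer of a differentiable convex function over the convex set $C_N$. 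You instead take the midpoint $w_N=\tfrac12\pi_N(\widehat{u})+\tfrac12\widehat{u}_N$, note that it stays in $C_N$ by convexity of $C$ (and linearity of $H_N$), and invoke only the zeroth-order optimality $J_N(\widehat{u}_N)\leq J_N(w_N)$ together with the exact identity \eqref{egalJ_N} at $t=\tfrac12$. Your route is more elementary — it never uses differentiability of $J_N$ nor the variational characterization of the constrained minimizer — at the cost of a slightly worse constant ($\Vert \pi_N(\widehat{u})-\widehat{u}_N\Vert^2_{H_N}\leq 2\bigl(J_N(\pi_N(\widehat{u}))-J_N(\widehat{u}_N)\bigr)$ versus the paper's factor $1$), which is irrelevant for the limit. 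One small point worth flagging: the identity \eqref{egalJ_N} as printed in the paper contains a typo (${\cal I}_n(u_N)-{\cal I}_n(u_N)$ should read ${\cal I}_n(u_N)-{\cal I}_n(v_N)$); you have implicitly used the corrected version, and since you only drop that term as nonnegative, nothing in your argument is affected.
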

 
\begin{proof}
As $J_N$ is strongly convex (\ref{fortconv}) and differentiable, then 
 $$J_{N}(\pi_N(\widehat{u}  ))-J_{N}(\widehat{u}_{N} ) \geq (J_{N}'(\widehat{u}_{N}),\pi_{N}(\widehat{u}  )-\widehat{u}_{N} )_{H_N}+\Vert \pi_N(\widehat{u}  ))-\widehat{u}_N\Vert^2_{H_N},  $$ 
 where $J_{N}'$ denotes the derivative of $J_N$. 
 As $\pi_N(\widehat{u}  ) \in H_N\cap C$ and $\widehat{u}_{N}$ solves (\ref{pbdis}), $$(J_{N}'(\widehat{u}_{N}),\pi_{N}(\widehat{u}  )-\widehat{u}_{N} )_{H_N} \geq 0,$$
 so that 
 $$   \Vert \pi_N(\widehat{u}  ))-\widehat{u}_N\Vert^2_{H_N} \leq  J_{N}(\pi_N(\widehat{u}  ))-J_{N}(\widehat{u}_{N})  $$ 
 (\ref{int2}) comes from the application of proposition \ref{convJ}.
\end{proof}

\begin{theorem}
Under (\ref{H1}), (\ref{H2}), (\ref{H3}) (\ref{H4}) and (\ref{H5})
\begin{equation*}
\widehat{u}_N  \underset{N\to +\infty}{\longrightarrow}\widehat{u}   \quad \mbox{in $E$},
\end{equation*}
\end{theorem}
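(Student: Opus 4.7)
The plan is to combine the two intermediate results (Proposition \ref{convJ} and Proposition \ref{intermed}) that have just been established with the classical piecewise-linear approximation property (\ref{PN}) and the boundedness estimate (\ref{HN}) from Proposition \ref{rkhsHN}. The natural route is a triangle inequality split
\begin{equation*}
\|\widehat{u}_N-\widehat{u}\|_E \leq \|\widehat{u}_N-\pi_N(\widehat{u})\|_E+\|\pi_N(\widehat{u})-\widehat{u}\|_E,
\end{equation*}
where each piece is controlled by a different tool.

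First I would handle the second term. Since $\widehat{u}\in H\subset E=C([0,1],\RR)$, the classical interpolation property (\ref{PN}) gives $\pi_N(\widehat{u})\to\widehat{u}$ in $E$ as $N\to+\infty$, so that term tends to zero.

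For the first term, I would use the fact that $\widehat{u}_N-\pi_N(\widehat{u})$ lies in $H_N$ (both elements do), so that by the stability estimate (\ref{HN}) of Proposition \ref{rkhsHN},
\begin{equation*}
\|\widehat{u}_N-\pi_N(\widehat{u})\|_E\leq c\,\|\widehat{u}_N-\pi_N(\widehat{u})\|_{H_N},
\end{equation*}
with a constant $c$ independent of $N$. Proposition \ref{intermed} then shows that the right-hand side converges to $0$. Combining the two bounds yields the convergence in $E$.

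I do not anticipate a genuine obstacle: the heavy lifting was done in Propositions \ref{convJ} and \ref{intermed}, where strong convexity of $J_N$ turned convergence of values into convergence of minimizers in the $H_N$-norm, and in Proposition \ref{rkhsHN}, which provides a uniform-in-$N$ embedding constant from $H_N$ into $E$. The only subtle point worth mentioning explicitly is the use of uniformity of $c$ in (\ref{HN}); without that uniformity, passing from the $H_N$-norm to the uniform norm would not survive the limit. Once that is noted, the argument is a one-line triangle inequality.
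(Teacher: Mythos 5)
Your proposal is correct and follows essentially the same route as the paper: the same triangle inequality split, with $\|\pi_N(\widehat{u})-\widehat{u}\|_E\to 0$ by the classical interpolation property and $\|\widehat{u}_N-\pi_N(\widehat{u})\|_E\leq c\,\|\widehat{u}_N-\pi_N(\widehat{u})\|_{H_N}\to 0$ by the uniform embedding constant of Proposition \ref{rkhsHN} combined with Proposition \ref{intermed}. In fact your write-up is slightly cleaner, since the paper's displayed inequality contains a typographical slip (it bounds $\|\pi_N(\widehat{u})-\widehat{u}\|_E$ where it should bound $\|\pi_N(\widehat{u})-\widehat{u}_N\|_E$), and you correctly emphasize the one subtle point, namely that the constant $c$ in (\ref{HN}) is independent of $N$.
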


\begin{proof}
\begin{equation*}
\|\widehat{u}_N -\widehat{u}  \|_E \leq \|\widehat{u}_N -\pi_
N(\widehat{u}  )\|_E+\|\pi_N(\widehat{u}  )-\widehat{u}  \|_E.
\end{equation*}
 We know from approximation theory in the Banach $E$ that 
\begin{equation*}
\|\ \pi_N(\widehat{u}  )-\widehat{u}  \|_E \underset{N\to +\infty}\longrightarrow 0.
\end{equation*}
As $H_N$ is an Hilbertian subspace of $E$ (see (\ref{HN})),   
$$\|\ \pi_N(\widehat{u}  )-\widehat{u}  \|_E  \leq c \|\ \pi_N(\widehat{u}  )-\widehat{u}  \|_{H_N}.$$
Proposition \ref{intermed} gives the result. 
\end{proof}

\section{Stochastic Correspondence of Constrained Optimal Smoothing}
The overall goal of the paper  is to find a correspondance between the solution $\widehat{u}  $ of (\ref{prob}) and the posterior distribution  $\{ U(x)\suchthat U \in C,Y_i=y_i \}$,
  $(U_x)_{x \in [0,1]}$ being the Gaussian process associated to the covariance fonction $K$, the kernel of the RKHS $H$.  The observations are written as    $$Y_i=U(x_i)+E_i, 1\leq i \leq n,$$ with  $E=(E_i)_i$ is a centered Gaussian vector ${\cal N}(0,\sigma I)$.\\

As we defined a finite dimensional approximation space $H_N$ of $H$, it is quite natural to construct a finite dimensional Gaussian process $U_N$ to approach $U$. 
Using the subdivision (\ref{partition}),  we approximate the Gaussian process  $U$ by the following finite-dimensional Gaussian process~: 
\begin{equation}\label{YN}
U_N(x):=\sum_{j=0}^{N}U(t_{j})\varphi_{j}(x), \qquad x\in X,
\end{equation}
   Note that $\xi := \left(U(t_{0}), \ldots,U(t_{N})\right)^\top$ is a zero-mean Gaussian vector with covariance matrix $\Gamma_N=\left(K(t_{i},t_{j})\right)_{0\leq i,j\leq N}$, where $K$ is the covariance function of  $U$.  \\
   In the following proposition, we prove that the Gaussian process $U_N$ is associated with the Hilbertian space $H_N$ defined in section 3.

\begin{prop}\label{prop13}
The process $U_N$ defined by (\ref{YN}) is a zero-mean Gaussian process with covariance function~: 
\begin{equation}\label{KN}
\forall x', x\in [0,1], \qquad K_N(x',x)= \sum_{i,j=0}^{N} K(t_{i},t_{j}) \varphi_{j}(x)\varphi_{i}(x')=\phi(x)^{{\top}}\Gamma_N\phi(x').
\end{equation}
 
where $\phi=(\varphi_{0},\ldots,\varphi_{N})^\top.$  
If  (\ref{H5}) is satisfied, then the RKHS associated to $U_N$ with the reproducing kernel $K_N$ is 
\begin{equation*} 
H_N:=\vect\{\varphi_{j}, \ j=0,\ldots,N\}
\end{equation*} 
with the scalar product (\ref{proscalHN})
\end{prop}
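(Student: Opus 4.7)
The plan is to handle the two assertions separately. The first claim, that $U_N$ is a centered Gaussian process with covariance (\ref{KN}), reduces to a one-line check using the definition (\ref{YN}). Writing $U_N(x) = \phi(x)^\top \xi$ with $\xi := (U(t_0),\ldots,U(t_N))^\top$, the vector $\xi$ is zero-mean Gaussian with covariance $\Gamma_N$ since $U$ is centered Gaussian with covariance $K$. For any $x_1,\ldots,x_m\in[0,1]$, the vector $(U_N(x_1),\ldots,U_N(x_m))^\top$ is a deterministic linear image of $\xi$, hence is Gaussian with zero mean; this shows $U_N$ is a centered Gaussian process. The covariance formula follows from $\mathrm{cov}(U_N(x),U_N(x')) = \phi(x)^\top \mathrm{cov}(\xi) \phi(x') = \phi(x)^\top \Gamma_N \phi(x')$, which expanded coincides with (\ref{KN}).

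For the second claim, I would directly invoke Proposition \ref{rkhsHN}: under (\ref{H5}), that proposition already shows that $H_N$ endowed with the inner product (\ref{proscalHN}) is a Hilbert space of functions in which the reproducing property with kernel $K_N$ holds. By the uniqueness part of the Moore--Aronszajn theorem, the RKHS attached to a given positive-definite kernel is unique; and by the standard correspondence between a centered Gaussian process and its associated RKHS (the RKHS being the one whose reproducing kernel equals the covariance function), the RKHS of $U_N$ must coincide with $H_N$ equipped with (\ref{proscalHN}).

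The only subtle point is verifying that the kernel $K_N$ is strictly positive definite, so that (\ref{proscalHN}) really is an inner product and the RKHS is nondegenerate of the expected dimension $N+1$. This is exactly the role of assumption (\ref{H5}): invertibility of $\Gamma_N$ together with the linear independence of the hat functions $\{\varphi_j\}_{j=0}^N$ ensures that for any non-trivial linear combination $\sum_i a_i K_N(\cdot,x_i)$, evaluation against the coefficient vector gives a non-zero quadratic form through $\Gamma_N$. In summary, the statement carries no new technical difficulty beyond Proposition \ref{rkhsHN}; it is a probabilistic reinterpretation of the algebraic RKHS structure already built in Section~3, packaged so that it can be used to set up the MAP correspondence in the next section.
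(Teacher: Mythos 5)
Your proof is correct and follows exactly the route the paper intends: the paper's own proof is literally the single word ``Obvious'', relying implicitly on the direct covariance computation $\mathrm{cov}(U_N(x),U_N(x'))=\phi(x)^{\top}\Gamma_N\phi(x')$ and on Proposition \ref{rkhsHN}, which already established the reproducing property of $K_N$ on $H_N$ with the inner product (\ref{proscalHN}). Your write-up simply makes these two steps explicit, so there is nothing to add or correct.
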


\begin{proof}
 Obvious
\end{proof}

In the following proposition, we denote by $\overset{\circ}{\widehat{C_N}}$ the interior of $C_N$ in the finite-dimensional space $H_N$, where $C_N$ is defined by (\ref{CN}).

\begin{theorem}\label{thm3}
The posterior likelihood function of $\{U_N\suchthat U_N\in C,  Y_i=y_i \}$   is of the form
\begin{equation}\label{finitePostDis}
L_{pos}^N(h)=k_N^{-1}\mathds{1}_{h\in   C_N\cap I}\exp\left(-\frac{1}{2}J_N(h)\right),
\end{equation}
where $k_N$ is a normalizing constant. Then, the MAP estimator $\widehat{v}_N$ as the mode of the posterior distribution   $\{U_N\suchthat U_N\in C,  Y_i=y_i \}$   is well defined and is equal to   $\widehat{u}_N $ solution of 
(\ref{pbdis}).
\end{theorem}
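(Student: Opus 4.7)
I would prove this by a direct Bayes-rule computation, once the prior law of $U_N$ has been identified as an explicit Gaussian density on the finite-dimensional space $H_N$ through the coordinate chart $\mathcal{I}_N$.

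\emph{Step 1: Prior density.} By Proposition \ref{prop13}, $U_N=\sum_{j=0}^N U(t_j)\varphi_j$, and the coordinate vector $\xi=(U(t_0),\ldots,U(t_N))^{\top}$ is a centered Gaussian in $\RR^{N+1}$ with covariance $\Gamma_N$, which is invertible by (\ref{H5}). Parameterizing $H_N$ by $c=\mathcal{I}_N(h)$, the law of $U_N$ admits a density proportional to $\exp\bigl(-\tfrac12 c^{\top}\Gamma_N^{-1}c\bigr)$, and by the very definition (\ref{normHN}) of the norm on $H_N$ this is precisely $\exp\bigl(-\tfrac12\|h\|_{H_N}^2\bigr)$.

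\emph{Step 2: Likelihood.} Conditional on $U_N=h$, one has $Y_i=h(x_i)+E_i$ with $E\sim\mathcal{N}(0,\sigma^2 I_n)$ independent of $U$, so the likelihood of the data vector $\boldsymbol y$ is proportional to $\exp\bigl(-\tfrac{1}{2\sigma^2}\|\mathcal{I}_n(h)-\boldsymbol y^{\top}\|_n^2\bigr)$.

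\emph{Step 3: Bayes' rule and conditioning on the constraint.} Multiplying prior and likelihood yields, up to a normalizing constant, a posterior density proportional to
\begin{equation*}
\exp\!\Bigl(-\tfrac12\bigl(\|h\|_{H_N}^2+\tfrac{1}{\sigma^2}\|\mathcal{I}_n(h)-\boldsymbol y^{\top}\|_n^2\bigr)\Bigr)=\exp\bigl(-\tfrac12 J_N(h)\bigr).
\end{equation*}
Because $U_N$ is $H_N$-valued by construction, the event $\{U_N\in C\}$ is identical to $\{U_N\in C_N\}$ with $C_N=H_N\cap C$; conditioning further on this event truncates the posterior to $C_N$. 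Hypotheses (\ref{H1}) and (\ref{H4}) guarantee that $C_N$ has positive mass under the unconditioned posterior (it is nonempty, closed, convex in $H_N$), so the normalizing constant $k_N$ is finite and we recover (\ref{finitePostDis}).

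\emph{Step 4: MAP characterization.} Since $t\mapsto e^{-t/2}$ is strictly decreasing, maximizing $L_{pos}^N$ over $H_N$ is equivalent to minimizing $J_N$ over $C_N$, i.e.\ to solving exactly the problem (\ref{pbdis}). By the theorem established just before, this problem admits a unique minimizer $\widehat{u}_N$ (strong convexity of $J_N$, Proposition \ref{propJN}), hence the mode is well defined and $\widehat{v}_N=\widehat{u}_N$.

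The only genuinely delicate point is Step 1: one must justify rigorously what "density of $U_N$" means when $U_N$ is viewed as a random element of the abstract Hilbert space $H_N$ rather than of $\RR^{N+1}$. Once the parameterization $h\leftrightarrow\mathcal{I}_N(h)$ is used, the identification of the Gaussian exponent with $\|h\|_{H_N}^2$ becomes tautological from the definition (\ref{proscalHN}); steps 2--4 are then standard Bayesian bookkeeping together with the uniqueness result already proved for $(P_N)$.
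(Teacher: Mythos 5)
Your proposal is correct and follows essentially the same route as the paper: identify the prior law of $U_N$ as an explicit Gaussian density on $H_N$ through the coordinate isomorphism $h\leftrightarrow \mathcal{I}_N(h)$ (so that the exponent $c^{\top}\Gamma_N^{-1}c$ is exactly $\|h\|_{H_N}^2$ by (\ref{proscalHN})), multiply by the Gaussian likelihood of the observations, truncate to $C_N=H_N\cap C$, and conclude that maximizing the posterior density is minimizing $J_N$ over $C_N$, whose unique solution is $\widehat{u}_N$ by Proposition \ref{propJN}. The only imprecision is in Step 3: a nonempty closed convex subset of the finite-dimensional space $H_N$ can still be Lebesgue-null (e.g.\ contained in a proper affine subspace), so (\ref{H1}) and (\ref{H4}) alone do not guarantee that $C_N$ has positive mass; one needs the interior of $C_N$ in $H_N$ to be nonempty, or, as the paper does, one simply works under the standing assumption $P_{Y_i=y_i}(U_N\in C)\neq 0$.
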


\begin{proof}
 First, remark that the sample paths of $U_N$ are in $H_N$ by construction. Hence, it makes sense to define the density of $U_N$ with respect to the uniform reference measure $\lambda_N$ on $H_N$. The density is defined up to a multiplicative constant and to give it an explicit expression, we consider the following linear isomorphism~: 
\begin{equation*}\label{iso}
i~:c\in\mathbb{R}^{N+1} \longmapsto u:=\sum_{j=0}^N c_j\varphi_j\in H_N.
\end{equation*}
We can define the measure $\lambda_N$ on $H_N$ as the image measure $\lambda_N:=i(dc)$, where $dc=dc_0\times\ldots \times dc_N$ is the   volume measure in $\mathbb{R}^{N+1}$. So, if $B\in \mathcal{B}(H_N)$ is a Borelian subset of $H_N$, we have
\begin{equation*}
\lambda_N(B)=\int_{\mathbb{R}^N}\mathds{1}_{i^{-1}(B)}(c)dc_1\times\ldots\times dc_N.
\end{equation*} 
To calculate the probability density function  of the Gaussian Process $U_N=\sum_{j=0}^{N}U(t_{j})\varphi_{j}$, we write 
\begin{equation*}
P(U_N\in B)=P\left(\xi\in i^{-1}(B)\right).
\end{equation*} 
Using the fact that $\xi= \left(U(t_{0}), \ldots,U(t_{N})\right)^\top$ is a  zero-mean Gaussian vector $\mathcal{N}(0,\Gamma_N)$, we have
\begin{eqnarray*}
P(U_N\in B)&=&\int_{\mathbb{R}^N}\mathds{1}_{i^{-1}(B)}(c)\frac{1}{\sqrt{2\pi}^N|\Gamma_N|^{1/2}}\exp{\left(-\frac{1}{2}c^\top\Gamma_N^{-1} c\right)}dc\\
&=&\int_{\mathbb{R}^N}\mathds{1}_{B}(i(c))\frac{1}{\sqrt{2\pi}^N|\Gamma_N|^{1/2}}\exp{\left(-\frac{1}{2}\|i(c)\|_N^2\right)}dc.
\end{eqnarray*}
By the transfer formula, we get 
\begin{equation*}
P(U_N\in B)=\int_{H_N}\mathds{1}_B(u)\frac{1}{\sqrt{2\pi}^N|\Gamma_N|^{1/2}}\exp{\left(-\frac{1}{2}\|u\|_N^2\right)}d\lambda_N(u).
\end{equation*} 
Hence, the  density of $U_N$ with respect to $\lambda_N$
is the function 
\begin{equation*}
f_{\{U_N\}}:u\in H_N\longmapsto \frac{1}{\sqrt{2\pi}^N|\Gamma_N|^{1/2}}\exp{\left(-\frac{1}{2}\|u\|_N^2\right)}.
\end{equation*}

Now let us find the density of the conditional distribution $\{U_N\suchthat   Y_i=y_i \}$. \\

As $Y_i=U_N(x_i)+{\cal E}_i$, where ${\cal E}=({\cal E}_i)_i$ is a centered Gaussian vector ${\cal N}(0,\sigma I)$, the density of  $\{  (Y_1,\ldots,Y_n)  \suchthat U_N =u \}$ is given by 
\begin{equation*}
f_{\{(Y_1,\ldots,Y_n)\suchthat U_N =u\}}:(y_1,\ldots,y_n)\longmapsto \frac{1}{\sqrt{2\pi}^n}\exp\left({-\frac{1}{2\sigma^2}\sum_{i=1}^n(y_i-u(x_i))^2}\right).
\end{equation*}
Let us apply the Bayes principle: the density of the distribution $\{U_N\suchthat   Y_i=y_i \}$ is given by
\begin{equation*}
f_{\{U_N\suchthat   Y_i=y_i\}}(u)=\displaystyle\frac{f_{\{(Y_1,\ldots,Y_n)\suchthat U_N =u\}}(y_1,\ldots,y_n) \times f_{\{U_N\}}(u)}{f_{\{(Y_1,\ldots,Y_n)\}} (y_1,\ldots,y_n)}.
\end{equation*}
We consider $f_{\{(Y_1,\ldots,Y_n)\}} (y_1,\ldots,y_n)$ as a constant of normalization $k$. Then 
\begin{equation*}
f_{\{U_N\suchthat   Y_i=y_i\}}(u)=k^{-1} \frac{1}{\sqrt{2\pi}^n}\frac{1}{\sqrt{2\pi}^N|\Gamma_N|^{1/2}} \exp\left({-\frac{1}{2\sigma^2}\sum_{i=1}^n(y_i-u(x_i))^2}-\frac{1}{2}\|u\|_N^2\right).
\end{equation*}
Setting $k_{N,n}=k^{-1}\displaystyle \frac{1}{\sqrt{2\pi}^n}\frac{1}{\sqrt{2\pi}^N|\Gamma_N|^{1/2}}$, 
\begin{equation}\label{postdistrib}
f_{ \{U_N\suchthat   Y_i=y_i\}}(u)=k_{N,n}\exp\left( -\frac{1}{2}J_N(u)\right).
\end{equation}

Let us  introduce the inequality constraints described by the set $C$. if $B\in \mathcal{B}(H_N)$,
\begin{equation*}
P( U_N \in B \suchthat U_N \in C,   Y_i=y_i) = P_{Y_i=y_i} ( U_N \in B \suchthat U_N \in C)=\frac{ P_{Y_i=y_i}( U_N \in B\cap C) }{ P_{Y_i=y_i}( U_N \in  C)},
\end{equation*} 
so that 

\begin{equation*}
P( U_N \in B \suchthat U_N \in C,   Y_i=y_i) =  \displaystyle\frac{1}{ P_{Y_i=y_i}( U_N \in   C)} \int_{H_N}\mathds{1}_{B\cap C}(u)f_{ \{U_N\suchthat   Y_i=y_i\}}(u) d\lambda_N(u) \Leftrightarrow  
\end{equation*} 
\begin{equation*}
P( U_N \in B \suchthat U_N \in C,   Y_i=y_i) =  \displaystyle\frac{k_{N,n}}{ P_{Y_i=y_i}( U_N \in   C)} \int_{H_N\cap C}\mathds{1}_{B}(u) \exp\left( -\frac{1}{2}J_N(u)\right) d\lambda_N(u).  
\end{equation*} 
In the Bayesian framework, as far as $P_{Y_i=y_i}( U_N \in   C)\neq 0$, the  density of the posterior conditional distribution  $\{ U_N  \suchthat U_N \in C,   Y_i=y_i\}$ is the following truncated probability density function  with respect to $\lambda_N$~:
\begin{equation}\label{condpostdist}
f_{\{U_N \in B \suchthat U_N \in C,   Y_i=y_i\}}(u)=c \  \mathds{1}_{\{u\in H_N\cap C\}}\exp{\left(-\frac{1}{2} J_N(u)\right)}.
\end{equation} 
 The density $f_{\{U_N \in B \suchthat U_N \in C,   Y_i=y_i\}}$ is also called the posterior likelihood function denoted by $L^N_{pos}$. By definition, the MAP estimator $\hat{u}_N$ is the solution of the following optimization problem
\begin{equation*}
\arg\max L^N_{pos}(u)=\arg\min\left(-2 \log L^N_{pos}(u)\right).
\end{equation*}   
From expression (\ref{condpostdist}), the MAP estimate $\widehat{v}_N$ is   $\widehat{u}_N $ solution of 
(\ref{pbdis}) 
\end{proof}

\section{Conclusion} 
In this paper, we considered the constrained optimal problem. We approach it by the usual piecewise linear projection to obtain an approximate solution. We proved the convergence of these approximations to the optimal solution. If we rewrite the problem in a certain model uncertainty framework, we realized that the  obtained approximate solution is also the Maximum A Posteriori of the posterior distribution of an approximation of the stochastic model. These results are theoretical but they can lead to practical application: this paper gives two options: deterministic optimisation algorithms or methods of Bayesian estimation.

\end{document}